\newtheorem{theorem}{Theorem}[section]
\newtheorem{lemma}[theorem]{Lemma}
\newtheorem{corollary}[theorem]{Corollary}
\newtheorem{remark}[theorem]{Remark}
\newtheorem*{question}{Question}
\newtheorem{cor}{Corollary}
\newtheorem*{thma}{Theorem A}
\newtheorem*{thmb}{Theorem B}
\newtheorem*{thmc}{Theorem C}
\author{Dieter Degrijse}
\title[]{Amenable groups of finite cohomological dimension and the zero divisor conjecture}
\address{School of Mathematics, Statistics and Applied Mathematics, NUI Galway, Ireland}
\email{dieter.degrijse@nuigalway.ie}
\begin{document}

\begin{abstract} We prove that every amenable group of cohomological dimension two whose integral group ring is a domain is solvable and investigate certain homological finiteness properties of groups that satisfy the analytic zero 
divisor conjecture and act on an acyclic $CW$-complex with amenable stabilisers. \end{abstract}

\maketitle

\section{Introduction}
All groups appearing in this paper are assumed to be discrete. Recall that the class of elementary amenable groups is by definition the smallest class of groups that contains all finite and all abelian groups and that is closed under 
directed unions, extensions, quotients and subgroups. A group $G$ is said to be amenable if for every continuous action of $G$ on a compact, Hausdorff space $X$, there is a $G$-invariant probability measure on $X$. As the name 
suggest, all elementary amenable groups are amenable (e.g.~see \cite[Appendix G]{Bekka}) and by now there is a multitude of examples of groups that are amenable but not elementary amenable. Such groups arise for example in 
the class of branch groups, which includes the first example by Grigorchuk of a group of intermediate growth (see \cite{BGS},\cite{jus}). Another important source of amenable non-elementary amenable groups are (commutator 
subgroups of) topological full groups of Cantor minimal systems (see \cite{jusmon}) and we refer to the introduction of  \cite{jus} for more examples.

 However, as far as we are aware none of the currently known examples of amenable but not elementary amenable groups are known to have finite cohomological dimension over any field. For example, many Branch groups, including 
Grigorchuk's example, are commensurable to an $n$-fold direct product of themselves for some $n\geq 2$ (see \cite{BGS}). By Corollary \ref{cor: comm}, the cohomological dimension of such a non locally-finite group is infinite over any 
field. Since commutator subgroups of topological full groups of Cantor minimal systems contain $\bigoplus_{n\geq 0}\mathbb{Z}$ as a subgroup (see \cite[Remark 4.3(2)]{GriMed}), these groups also have infinite cohomological 
dimension over any field. Hence, one might well wonder if in fact all amenable groups of finite cohomological dimension (over some ring or field) are elementary amenable. 

We will be working with cohomological dimension over the integers, which we will therefore just refer to as cohomological dimension. Since all elementary amenable groups of finite cohomological dimension must be torsion-free and 
have finite Hirsch length by \cite[Lemma 2]{Hillman2}, one can conclude from \cite[Corollary 1]{HillmannLinnel} that they are virtually solvable. We are therefore led to the following question, which was brought to the authors attention 
by Peter Kropholler. 

\begin{question} Is every amenable group of finite cohomological dimension virtually solvable?
\end{question}
By Stalling's theorem (\cite{Stallings}) this question obviously has a positive answer for groups of cohomological dimension at most one.  By the Tits alternative, all amenable linear groups of finite cohomological dimension are 
virtually solvable. Also note that the answer to the more general question where one replaces `amenable' by `does not contain non-abelian free subgroups' is no. Indeed, Olshanskii's example of a torsion-free Tarski monster, i.e.~ a 
finitely generated torsion-free non-cyclic group all of whose proper subgroups are cyclic, is non-amenable and admits an aspherical presentation, showing that it has cohomological dimension two (e.g.~see \cite{Ol}). \\

Our main results, which we will state below, depend on the validity of Kaplansky's zero divisor conjecture for group rings or a generalisation of it due to Linnell (e.g.~see \cite{Linnell}), called the analytic zero divisor conjecture, which states 
that if $
\alpha \in \mathbb{C}[G]\setminus \{0\}$ and $\beta \in l^2(G)\setminus\{0\} $, then $\alpha\beta\neq 0$. There are no known counterexamples to the (analytic) zero divisor conjecture and it has been proven for a wide class of groups, 
including torsion-free elementary amenable groups (see \cite[Th. 1.4]{KLM}) and extensions of right orderable groups by torsion-free elementary amenable groups (e.g.~see \cite[Th. 8.2]{Linnell}). Moreover, Elek showed in \cite{Elek} 
that for finitely generated amenable groups the Kaplansky zero divisor conjecture over $\mathbb{C}$ and the analytic zero divisor conjecture are equivalent. We also remark that for a torsion-free group $G$, $\mathbb{Z}[G]$ is a 
domain if and only if $\mathbb{Q}[G]$ is a domain and for any field $\mathbb{Z}\subseteq k \subseteq \mathbb{C}$, the Atiyah conjecture of order $\mathbb{Z}$ implies that $k[G]$ is a domain (e.g. see \cite[Lemma 10.15]
{LuckL2Book}). \\

Our first main result concerns the homological finiteness properties $FP_n$ and $FP$ of a group. These notions will be recalled in the next section.

\begin{thma} Let $G$ be an amenable group such that $\mathbb{Z}[G]$ is a domain. If $G$ has cohomological dimension $n$ and is of type $FP_{n-1}$, then $G$ is of type $FP$.

\end{thma}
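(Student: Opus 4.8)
The plan is to reduce the assertion to the finite generation of a single projective module and then to extract that finiteness from amenability via the Ore localisation of $\mathbb{Z}[G]$.

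First I would dispose of the cases $n\le 1$: if $n=0$ then $G$ is trivial, and if $n=1$ then $\operatorname{cd}(G)=1$ forces $G$ to be free by the Stallings--Swan theorem, so $G$, being amenable, free and non-trivial, is infinite cyclic and hence of type $FP$. Assume then that $n\ge 2$, so that $G$ is of type $FP_1$ and therefore finitely generated; moreover $G$ is infinite, since a finite group whose integral group ring is a domain is trivial. Choose a partial resolution $F_{n-1}\to\cdots\to F_0\to\mathbb{Z}\to 0$ by finitely generated free $\mathbb{Z}[G]$-modules (possible since $G$ is of type $FP_{n-1}$), and set $\Omega:=\ker(F_{n-1}\to F_{n-2})$. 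Since $\operatorname{cd}(G)=n$, the module $\Omega$ is projective, so that
\[
0\longrightarrow\Omega\longrightarrow F_{n-1}\longrightarrow\cdots\longrightarrow F_0\longrightarrow\mathbb{Z}\longrightarrow 0
\]
is a projective resolution of $\mathbb{Z}$ of length $n$. Hence $G$ is of type $FP$ exactly when $\Omega$ is finitely generated, and this is what remains to be proved. Note also that $\Omega$ embeds in the finitely generated free module $F_{n-1}$, and that $F_{n-1}/\Omega\cong\operatorname{im}(F_{n-1}\to F_{n-2})$ is torsion-free, being a submodule of the free module $F_{n-2}$.

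Next I would localise. Because $G$ is amenable and $\mathbb{Z}[G]$ is a domain, $\mathbb{Z}[G]$ satisfies the Ore condition, so it possesses a skew field of fractions $D$, flat as a $\mathbb{Z}[G]$-module. Since $G$ is infinite, the augmentation ideal of $\mathbb{Z}[G]$ contains an element $g-1$ with $g\ne 1$, which is a unit of $D$; hence $\mathbb{Z}\otimes_{\mathbb{Z}[G]}D=0$, and tensoring the resolution above with the flat module $D$ produces an exact sequence of $D$-vector spaces
\[
0\longrightarrow\Omega\otimes_{\mathbb{Z}[G]}D\longrightarrow F_{n-1}\otimes_{\mathbb{Z}[G]}D\longrightarrow\cdots\longrightarrow F_0\otimes_{\mathbb{Z}[G]}D\longrightarrow 0 ,
\]
all of whose terms except the first are finite-dimensional. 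Consequently $\dim_D(\Omega\otimes_{\mathbb{Z}[G]}D)$ is finite, equal to the alternating sum $\sum_{i=0}^{n-1}(-1)^{n-1-i}\operatorname{rk}_{\mathbb{Z}[G]}F_i$. (Morally this finiteness reflects the vanishing of the higher $L^2$-Betti numbers of an infinite amenable group.)

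It then remains to prove the following: a projective $\mathbb{Z}[G]$-module $\Omega$ which embeds in a finitely generated free module and has $\dim_D(\Omega\otimes_{\mathbb{Z}[G]}D)<\infty$ is finitely generated when $G$ is amenable. This is the step that genuinely uses amenability, and I expect it to be the main obstacle; it fails in general — for a free group of infinite rank the augmentation ideal is a non-finitely generated projective submodule of the group ring, and there the group ring is not even Ore. Picking finitely many elements of $\Omega$ whose images form a $D$-basis of $\Omega\otimes_{\mathbb{Z}[G]}D$ yields a finitely generated submodule $\Omega_0\subseteq\Omega$ with $\Omega/\Omega_0$ torsion, and by the last remark of the first paragraph $\Omega/\Omega_0$ is precisely the torsion submodule of the finitely generated module $F_{n-1}/\Omega_0$. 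So the problem reduces to showing that this torsion submodule is finitely generated — equivalently, that $\Omega$ contains no infinite strictly ascending chain of finitely generated submodules — and I would try to establish this using the Følner property of $\mathbb{Z}[G]$ (or Lück's von Neumann dimension together with its continuity and cofinality properties), the amenability entering decisively at exactly this point. Once $\Omega/\Omega_0$ is finitely generated, $\Omega$ is an extension of a finitely generated module by a finitely generated module, hence finitely generated, and $G$ is of type $FP$.
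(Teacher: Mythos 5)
Your reductions are fine as far as they go: the case $n\le 1$, the identification ``$G$ of type $FP$ $\Leftrightarrow$ the $n$-th kernel $\Omega$ is finitely generated'' (using that $\mathrm{cd}(G)=n$ makes $\Omega$ projective), the Ore localisation $D$ of $\mathbb{Z}[G]$, and the computation that $\Omega\otimes_{\mathbb{Z}[G]}D$ has finite $D$-dimension are all correct. But the proof stops exactly where the theorem actually lives. The statement you leave open --- that a projective $\mathbb{Z}[G]$-module which embeds in a finitely generated free module and has finite $D$-rank must be finitely generated when $G$ is amenable --- is the whole content of the theorem in your formulation, and nothing you sketch delivers it. Finite $D$-rank (equivalently, vanishing of higher $L^2$-Betti numbers, or finite von Neumann dimension) does not by itself imply finite generation, and the continuity/cofinality properties of von Neumann dimension you invoke only reproduce the rank computation you already have. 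Likewise, your reduction to ``the torsion submodule of $F_{n-1}/\Omega_0$ is finitely generated'' is not obviously any easier: $\mathbb{Z}[G]$ is far from Noetherian here (already for $BS(1,2)$ or subgroups of $\mathbb{Q}$), and torsion submodules of finitely generated modules over non-Noetherian Ore domains need not be finitely generated, so some genuinely new input beyond the Ore property is required. (There is also a small logical slip: ``$\Omega$ contains no infinite strictly ascending chain of finitely generated submodules'' is strictly stronger than finite generation of $\Omega$ over a non-Noetherian ring; what you need is that no such chain has union $\Omega$.)

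For comparison, the paper never attempts a direct finite-generation argument for $\Omega$. It uses the Bieri--Strebel criterion: since $G$ is $FP_{n-1}$ and $\mathrm{cd}(G)=n$, type $FP$ is equivalent to $\varinjlim \mathrm{H}^n(G,F_i)=0$ for every directed system of free modules with vanishing limit. It then tensors $0\to\mathbb{Z}[G]\to D\to D/\mathbb{Z}[G]\to 0$ with the $F_i$, kills the degree $n-1$ limit term by the $FP_{n-1}$ hypothesis, and reduces everything to showing $\mathrm{H}^n(G,\bigoplus_J D)=0$. The decisive input replacing your missing step is Van de Water's theorem: over a left Ore domain, torsion-free divisible modules are injective, so $\bigoplus_J D$ is an injective $\mathbb{Z}[G]$-module and its higher cohomology vanishes (and $\mathrm{H}^0$ vanishes because $g-1$ is invertible in $D$). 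If you want to salvage your route, you would need to prove your key claim about finite-rank projectives, which is an open-ended module-theoretic problem; the homological detour through injectivity of $\bigoplus_J D$ is how the paper sidesteps it.
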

Note that the conclusion of Theorem A is certainly false in general for non-amenable torsion-free groups. Indeed, for  $n\geq 1$, the Bestvina-Brady group $H_L$ associated to any flag-triangulation $L$ of the $(n-1)$-sphere contains a 
non-abelian free subgroup, has cohomological dimension $n$ by \cite[Th. 22]{LearySaad} and is of type $FP_{n-1}$ but not of type $FP$ by the main theorem of \cite{BestvinaBrady}. Also, the Tarski monster mentioned above is 
finitely generated and of cohomological dimension two, but is not finitely presented. \\

Using Theorem A we obtain a positive answer to the question above in the $2$-dimensional case, assuming the validity of the zero-divisor conjecture. This generalises \cite[Th. 3]{KLL}.

\begin{thmb} Every amenable group $G$ of cohomological dimension $2$ such that $\mathbb{Z}[G]$ is a domain is solvable and hence isomorphic to a solvable Baumslag-Solitar group $BS(1,m)$ for some non-zero $m \in \mathbb{Z}
$ or to a non-cyclic subgroup of the additive rationals.

\end{thmb}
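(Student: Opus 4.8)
\emph{Proof proposal.} First I would make a chain of reductions. Since $\mathbb{Z}[G]$ is a domain, $G$ is torsion-free. By the known classification of solvable groups of cohomological dimension two—these are exactly the groups $BS(1,m)$ with $m\neq 0$ together with the non-cyclic subgroups of $\mathbb{Q}$—it is enough to prove $G$ is solvable, and for that it suffices to show that every finitely generated subgroup of $G$ is metabelian, since a directed union of metabelian groups is metabelian. A finitely generated subgroup of cohomological dimension at most one is free, hence (being amenable) infinite cyclic or trivial, so it is metabelian; thus the whole problem comes down to the following claim: \emph{every finitely generated amenable group $K$ with $\mathbb{Z}[K]$ a domain and $\mathrm{cd}(K)=2$ is isomorphic to $BS(1,m)$ for some non-zero $m\in\mathbb{Z}$}, applied to the finitely generated subgroups of $G$ of cohomological dimension $2$ (for which $\mathbb{Z}[K]$, a subring of $\mathbb{Z}[G]$, is again a domain).

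To prove the claim, I would first invoke Theorem A: as $K$ is finitely generated it is of type $FP_1$, hence of type $FP$. Since $K$ is infinite amenable its $L^2$-Betti numbers all vanish, so its Euler characteristic—defined because $K$ is of type $FP$—is zero; with $\mathrm{cd}(K)=2$ this forces $\dim_{\mathbb{Q}}H_1(K;\mathbb{Q})=1+\dim_{\mathbb{Q}}H_2(K;\mathbb{Q})\geq 1$, so there is an epimorphism $\phi\colon K\twoheadrightarrow\mathbb{Z}$. If $\ker\phi$ is finitely generated, then (splitting $\phi$) $K=\ker\phi\rtimes\mathbb{Z}$; if $\ker\phi$ is not finitely generated, then since $K$ is of type $FP_2$ the Bieri–Strebel splitting theorem exhibits $K$ as an HNN extension over a finitely generated base, and because $K$ is amenable and torsion-free this can be arranged to be ascending. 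Either way $K=\langle H,t\mid t^{-1}ht=\psi(h)\rangle$ for some finitely generated subgroup $H\leq K$ and some monomorphism $\psi\colon H\hookrightarrow H$, with $\mathbb{Z}[H]$ a domain and $\mathrm{cd}(H)\leq\mathrm{cd}(K)=2$.

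The crux is the following lemma: \emph{if $L$ is a duality group of dimension $d$ and $\psi\colon L\hookrightarrow L$ is a monomorphism, then $\Gamma:=\langle L,t\mid t^{-1}\ell t=\psi(\ell)\rangle$ has $\mathrm{cd}(\Gamma)=d+1$.} To prove this I would run the Mayer–Vietoris sequence of the HNN extension with coefficients in $\mathbb{Z}\Gamma$, identifying $H^{d+1}(\Gamma;\mathbb{Z}\Gamma)$ with $\operatorname{coker}\bigl(\mathrm{id}-\theta\colon H^d(L;\mathbb{Z}\Gamma)\to H^d(L;\mathbb{Z}\Gamma)\bigr)$, where $\theta$ is built from restriction to $\psi(L)$, the isomorphism induced by $\psi^{-1}$, and conjugation by $t$. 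Since $L$ is a duality group, $D:=H^d(L;\mathbb{Z}L)\neq 0$, and since $L$ is of type $FP$ one has $H^d(L;\mathbb{Z}\Gamma)\cong\bigoplus_{L\backslash\Gamma}D$; the map $\phi$ grades this direct sum by $\mathbb{Z}$, each graded piece is nonempty, $\theta$ raises the grading by exactly one and is injective within each grading, and an elementary computation then shows $\mathrm{id}-\theta$ is not surjective, so $H^{d+1}(\Gamma;\mathbb{Z}\Gamma)\neq 0$. Granting the lemma, I would finish as follows. If $\mathrm{cd}(H)=2$, then by Theorem A the group $H$ is of type $FP$; being amenable and infinite it is one-ended (not finite or two-ended, else $\mathrm{cd}(H)\le 1$; not free, since $\mathrm{cd}(H)=2$), so $H^0(H;\mathbb{Z}H)=H^1(H;\mathbb{Z}H)=0$, and $H^2(H;\mathbb{Z}H)\neq 0$ because for a group of type $FP$ the cohomological dimension is the top non-vanishing degree of $H^\ast(-;\mathbb{Z}[-])$; hence $H$ is a $2$-dimensional duality group, and the lemma gives $\mathrm{cd}(K)=3$, a contradiction. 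So $\mathrm{cd}(H)\leq 1$ and $H$ is free, hence infinite cyclic (it is nontrivial, else $K\cong\mathbb{Z}$ has cohomological dimension one); writing $H=\langle a\rangle$ and $\psi(a)=a^m$ we obtain $K=\langle a,t\mid t^{-1}at=a^m\rangle=BS(1,m)$, as claimed.

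The main obstacle is the lemma, specifically the careful bookkeeping in the Mayer–Vietoris computation: that $\theta$ respects the $\phi$-grading of $\bigoplus_{L\backslash\Gamma}D$ and raises it by precisely one, that it is injective within gradings (which one gets from the transfer when $[L:\psi(L)]<\infty$ and must argue separately otherwise), and that "identity minus such a shift" has nonzero cokernel on an infinite direct sum. A secondary point to pin down is that the Bieri–Strebel splitting is available under an $FP_2$ hypothesis and that an amenable torsion-free group splitting over a finitely generated subgroup is genuinely an ascending HNN extension (the amalgam and line-stabilising alternatives being excluded, the latter by reducing to the finitely-generated-kernel case).
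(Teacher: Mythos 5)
Your proposal follows the same skeleton as the paper's proof: reduce to finitely generated subgroups, use Theorem A to get type $FP$, deduce vanishing Euler characteristic from amenability to produce an epimorphism onto $\mathbb{Z}$, invoke Bieri--Strebel to write the group as an ascending HNN extension over a finitely generated base, rule out a base of cohomological dimension $2$, and finish with Gildenhuys' classification. Your reduction of the general case via the metabelian law (locally metabelian $\Rightarrow$ metabelian) is a perfectly good alternative to the paper's route through elementary amenability, finite Hirsch length and Hillman--Linnell. However, there are two genuine gaps in the finitely generated case.

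First, the Euler characteristic step. Vanishing of the $L^2$-Betti numbers of an infinite amenable group gives $\chi^{(2)}(K)=0$, but to conclude that $1-\dim_{\mathbb{Q}}H_1(K;\mathbb{Q})+\dim_{\mathbb{Q}}H_2(K;\mathbb{Q})=0$ you must identify the ordinary (naive) Euler characteristic of a group that is only of type $FP$ (not $FL$) with its $L^2$-Euler characteristic; this requires knowing that the Hattori--Stallings ranks of the finitely generated projectives are concentrated at the identity, i.e.\ the Bass conjecture. The paper handles this by citing Berrick--Chatterji--Mislin (amenable groups satisfy the Bass conjecture) together with Eckmann; your argument silently assumes it, though this is fixable by citation. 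Second, and more seriously, the crux of your argument is the duality-group lemma, and it is doubly incomplete. (a) You have not verified that the base $H$ is a duality group: by the Bieri--Eckmann criterion this needs, besides $H^0(H;\mathbb{Z}H)=H^1(H;\mathbb{Z}H)=0$, that $H^2(H;\mathbb{Z}H)$ is torsion-free as an abelian group, and this is not known to follow from ``type $FP$, $\mathrm{cd}=2$, one end'' for merely almost finitely presented groups (torsion-freeness of $H^2(-;\mathbb{Z}[-])$ is delicate even for finitely presented groups). (b) Inside the lemma, the injectivity of $\theta$ on each graded piece when $[L:\psi(L)]=\infty$ is precisely the hard content, and you concede you do not have an argument there; that case cannot be excluded, since non-Poincar\'e duality groups can contain isomorphic copies of themselves of infinite index. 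The paper avoids both issues by quoting Brown--Geoghegan (Theorem 0.1): for an ascending HNN extension $K=H\ast_t$ the relevant Mayer--Vietoris endomorphism $\alpha$ on $H^2(H,\mathbb{Z}[K])$ is injective, so with $H$ one-ended and of type $FP$ (hence $H^1(H,\mathbb{Z}[K])=0$) one gets $H^2(K,\mathbb{Z}[K])=0$, contradicting $\mathrm{cd}(K)=2$ since $K$ is of type $FP$. Replacing your lemma by that citation closes the gap and makes the duality discussion unnecessary.
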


A modified version of the proof of Theorem A gives the following theorem.

\begin{thmc} Let $G$ be a group of cohomological dimension $n$ that satisfies the analytic zero-divisor conjecture and admits an acyclic $G$-CW-complex $X$ with amenable stabilisers. If $G$ is of type $FP_{n-1}$ and $X$ does not 
have a free $G$-orbit of $n$-cells, then $G$ is of type  $FP$.
\end{thmc}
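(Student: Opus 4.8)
The plan is to reduce, exactly as for Theorem A, the statement to showing that a single projective syzygy is finitely generated, and then to extract that finiteness from the complex $X$. First the reductions. We may assume $n\ge 1$ (if $n=0$ then $G=1$ and there is nothing to prove). The analytic zero-divisor conjecture makes $\mathbb{C}[G]$, and a fortiori $\mathbb{Z}[G]$ and $\mathbb{Q}[G]$, a domain, so $G$ and all of its subgroups are torsion-free; in particular every cell stabiliser $G_\sigma$ is either trivial or infinite. Since $\mathrm{cd}\,G\le n$ and $G$ is of type $FP_{n-1}$ we may choose a resolution $0\to P_n\xrightarrow{\iota}F_{n-1}\to\cdots\to F_0\to\mathbb{Z}\to 0$ over $\mathbb{Z}[G]$ with $F_0,\dots,F_{n-1}$ finitely generated free and $P_n$ projective. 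Standard homological algebra (recalled in the next section) then says that $G$ is of type $FP$ if and only if $P_n$ is finitely generated, if and only if the canonical, and always injective, homomorphism $\bigoplus_{\lambda\in\Lambda}H^n(G;\mathbb{Z}[G])\to H^n(G;\bigoplus_{\lambda\in\Lambda}\mathbb{Z}[G])$ is surjective for every index set $\Lambda$; this surjectivity is what must be proved.

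The role played in Theorem A by the Ore division ring of $\mathbb{Q}[G]$ (which exists because $G$ is amenable with $\mathbb{Z}[G]$ a domain) will here be played by $\mathcal{U}(G)$, the ring of operators affiliated to the group von Neumann algebra $\mathcal{N}(G)$. For a stabiliser $H=G_\sigma$, the group $H$ is amenable and, being a subgroup of $G$, satisfies the analytic zero-divisor conjecture, so $\mathbb{Q}[H]$ is an Ore domain; let $\mathcal{D}_H$ be its division ring of fractions. As a left $\mathbb{C}[H]$-module $\ell^2(G)$ is a direct sum of copies of $\ell^2(H)$, so the analytic zero-divisor conjecture for $H$ implies that multiplication by any nonzero element of $\mathbb{Q}[H]$ is injective with dense image on $\ell^2(G)$, hence invertible in $\mathcal{U}(G)$; therefore $\mathcal{D}_H$ embeds in $\mathcal{U}(G)$ and $\mathcal{U}(G)$ becomes a $\mathcal{D}_H$-module. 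Since $\mathcal{D}_H$ is flat over $\mathbb{Z}[H]$ and, whenever $H$ is infinite, $\mathbb{Z}\otimes_{\mathbb{Z}[H]}\mathcal{D}_H=0$ (the augmentation ideal becomes the unit ideal), it follows that $\mathrm{Ext}^\ast_{\mathbb{Z}[H]}(\mathbb{Z},N)=0$ for every $\mathcal{D}_H$-module $N$; in particular $H^\ast(G_\sigma;\bigoplus_\Lambda\mathcal{U}(G))=0$ for every nontrivial stabiliser $G_\sigma$ and every $\Lambda$. This is precisely the stabiliser-level analogue of the vanishing ``$H^\ast(G;\,\mathcal{D}\text{-module})=0$'' that drives the proof of Theorem A.

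Now bring in the acyclicity of $X$. The augmented cellular chain complex of $X$ resolves $\mathbb{Z}$ over $\mathbb{Z}[G]$ by the modules $C_p(X)=\bigoplus_{\sigma\in\Sigma_p}\mathbb{Z}[G/G_\sigma]$, which yields for every coefficient module $M$ an equivariant spectral sequence $E_1^{p,q}=\prod_{\sigma\in\Sigma_p}H^q(G_\sigma;M)\Rightarrow H^{p+q}(G;M)$. Apply it with $M=\bigoplus_\Lambda\mathcal{U}(G)$: by the previous paragraph every row $q\ge 1$ vanishes (nontrivial stabilisers contribute $0$ for all $q$, trivial ones because $H^q(1;-)=0$ for $q\ge 1$), and the $q=0$ row reads $E_1^{p,0}=\prod_{\sigma\in\Sigma_p,\;G_\sigma=1}\bigl(\bigoplus_\Lambda\mathcal{U}(G)\bigr)$, since $\mathcal{U}(G)^{G_\sigma}=0$ whenever $G_\sigma$ is infinite. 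The hypothesis that $X$ has no free $G$-orbit of $n$-cells says exactly that $E_1^{n,0}=0$, hence $H^n\bigl(G;\bigoplus_\Lambda\mathcal{U}(G)\bigr)=0$ for every $\Lambda$. Finally, apply the long exact cohomology sequence to $0\to\bigoplus_\Lambda\mathbb{Z}[G]\to\bigoplus_\Lambda\mathcal{U}(G)\to\bigoplus_\Lambda(\mathcal{U}(G)/\mathbb{Z}[G])\to 0$; using $H^n(G;\bigoplus_\Lambda\mathcal{U}(G))=0$ one gets a natural isomorphism $H^n(G;\bigoplus_\Lambda\mathbb{Z}[G])\cong\mathrm{coker}\bigl(H^{n-1}(G;\bigoplus_\Lambda\mathcal{U}(G))\to H^{n-1}(G;\bigoplus_\Lambda(\mathcal{U}(G)/\mathbb{Z}[G]))\bigr)$. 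But $G$ is of type $FP_{n-1}$, so $H^{n-1}(G;-)$ commutes with arbitrary direct sums, and the right-hand side is the direct sum over $\Lambda$ of its value at $\Lambda=\{\ast\}$, which by the same long exact sequence equals $\bigoplus_\Lambda H^n(G;\mathbb{Z}[G])$; tracking the identifications shows the composite is the canonical map. Hence $P_n$ is finitely generated and $G$ is of type $FP$.

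The delicate point is the construction in the second paragraph: one needs a single $\mathbb{Z}[G]$-module containing $\mathbb{Z}[G]$ that is simultaneously ``$\mathcal{D}_{G_\sigma}$-divisible'' over every cell stabiliser, so that the vanishing theorem for amenable groups (the heart of Theorem A) can be transported through the equivariant spectral sequence; $\mathcal{U}(G)$ does the job precisely because the analytic zero-divisor conjecture guarantees that each $\mathbb{C}[G_\sigma]$ acts without zero divisors on $\ell^2(G)$. The hypothesis forbidding free $G$-orbits of $n$-cells enters only at the end, to ensure that in total degree $n$ the spectral sequence for $\bigoplus_\Lambda\mathcal{U}(G)$ sees only cells with infinite stabiliser — exactly the cells to which the $\mathcal{D}_{G_\sigma}$-argument applies. (Taking $X$ a point recovers Theorem A, the amenability of $G$ now being what supplies the division ring.)
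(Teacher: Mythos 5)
Your argument is correct and its skeleton is the paper's: embed $\mathbb{Z}[G]$ into $\mathcal{U}(G)$ (legitimised by the analytic zero-divisor conjecture), use the short exact sequence $0\to\mathbb{Z}[G]\to\mathcal{U}(G)\to\mathcal{U}(G)/\mathbb{Z}[G]\to 0$, kill $H^{n}(G;\bigoplus\mathcal{U}(G))$ via the equivariant (stabiliser) spectral sequence of $X$ — with the no-free-orbit hypothesis entering exactly where you put it, at $E_1^{n,0}$ — and use $FP_{n-1}$ to control degree $n-1$. Where you genuinely diverge is in the two supporting lemmas. First, the reduction: the paper invokes Strebel's criterion (its Lemma 2.1), showing $\varinjlim H^n(G,F_i)=0$ for every vanishing directed system of free modules, whereas you reduce to finite generation of the $n$-th syzygy $P_n$ via surjectivity of $\bigoplus_\Lambda H^n(G;\mathbb{Z}[G])\to H^n(G;\bigoplus_\Lambda\mathbb{Z}[G])$; both are standard, yours has the small advantage of needing only direct sums of copies of $\mathbb{Z}[G]$ rather than arbitrary vanishing systems (the paper's Lemma 2.1 in fact performs precisely this reduction from general modules to free ones). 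Second, the stabiliser vanishing: the paper's Lemma 3.1 shows $\bigoplus_J D$ is an \emph{injective} $\mathbb{Z}[K_\sigma]$-module (torsion-free divisible over an Ore domain, citing Van de Water) and kills $H^0$ by invertibility of $g-1$; you instead embed the Ore division ring $\mathcal{D}_H$ of $\mathbb{Q}[H]$ into $\mathcal{U}(G)$ and use flat change of rings together with $\mathcal{D}_H\otimes_{\mathbb{Z}[H]}\mathbb{Z}=0$ to get $\mathrm{Ext}^\ast_{\mathbb{Z}[H]}(\mathbb{Z},N)=0$ in all degrees for any $\mathcal{D}_H$-module $N$. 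This is a clean alternative that avoids the injectivity theorem altogether and handles $H^0$ and $H^{>0}$ uniformly. One point you assert rather than prove is that a nonzero element of $\mathbb{Q}[H]$ is invertible in $\mathcal{U}(G)$: injectivity on $\ell^2(G)$ is immediate from the analytic zero-divisor conjecture (the element lies in $\mathbb{C}[G]$), but dense image requires noting that the adjoint operator is again given by a nonzero element of $\mathbb{C}[G]$, hence is injective too; this is the same fact the paper extracts from L\"uck's treatment of $\mathcal{U}(G)$ (non-zero-divisors of $\mathcal{N}(G)$ become invertible), so it is a citation-level gap, not a flaw. Your final bookkeeping — $H^{n-1}(G;-)$ commuting with direct sums because of $FP_{n-1}$, and naturality identifying the resulting isomorphism with the canonical map — is sound.
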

Since an $n$-dimensional $G$-CW-complex obviously has no free $G$-orbit of $(n+1)$-cells, the following corollary is immediate.
\begin{cor} Let  $\Gamma$ be a group that admits an acyclic $n$-dimensional $\Gamma$-CW-complex with amenable stabilisers. Then any subgroup $G$ of $\Gamma$ of cohomological dimension $n+1$ that satisfies the analytic 
zero-divisor conjecture and is of type $FP_{n}$ is of type $FP$.
\end{cor}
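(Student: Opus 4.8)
The plan is to obtain this as an immediate application of Theorem C, the only work being to produce, from the data attached to $\Gamma$, a suitable acyclic $G$-CW-complex. First I would take an acyclic $n$-dimensional $\Gamma$-CW-complex $X$ with amenable stabilisers, as furnished by the hypothesis on $\Gamma$, and restrict the action along the inclusion $G\leq\Gamma$. Restriction of a $\Gamma$-CW-complex along a subgroup inclusion is again a CW-complex for that subgroup, of the same dimension, so $X$ becomes an $n$-dimensional $G$-CW-complex; acyclicity is a property of the underlying space and is therefore unaffected. For each cell $\sigma$ the $G$-isotropy group is $G_\sigma=G\cap\Gamma_\sigma$, a subgroup of the amenable group $\Gamma_\sigma$, hence amenable, since the class of amenable groups is closed under passage to subgroups. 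Thus $X$ is an acyclic $G$-CW-complex with amenable stabilisers.

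Next I would invoke Theorem C for the group $G$, taking the parameter ``$n$'' there to be $\mathrm{cd}(G)=n+1$. The required hypotheses are then: $G$ satisfies the analytic zero-divisor conjecture, which is assumed; $G$ is of type $FP_{(n+1)-1}=FP_n$, which is assumed; and $X$ has no free $G$-orbit of $(n+1)$-cells. This last condition holds vacuously, because $X$ is $n$-dimensional and so has no $(n+1)$-cells whatsoever. Theorem C then yields that $G$ is of type $FP$, which is exactly the assertion.

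There is no genuine obstacle here: the entire mathematical content sits in Theorem C, and what remains is the bookkeeping described above together with the routine facts that restriction preserves the CW-structure, dimension and acyclicity, that subgroups of amenable groups are amenable, and that an $n$-dimensional complex trivially satisfies the ``no free orbit of $(n+1)$-cells'' clause — the one-dimensional gap between the dimension of $X$ and $\mathrm{cd}(G)$ being precisely what makes that clause automatic.
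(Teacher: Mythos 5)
Your proposal is correct and matches the paper's argument: the paper also deduces the corollary immediately from Theorem C, observing that an $n$-dimensional complex has no free orbit of $(n+1)$-cells, with the restriction of the $\Gamma$-action to $G$ (and amenability of the resulting stabilisers as subgroups of amenable groups) left implicit. You simply spell out these routine verifications in more detail.
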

Recall that a group is said to be almost coherent, if every finitely generated subgroup is almost finitely presented, i.e.~of type $FP_2$.
\begin{cor} Fundamental groups of graphs of groups with cyclic edge groups and vertex groups that are either subgroups of the additive rational numbers or solvable Baumslag-Solitar groups are almost coherent if they satisfy the 
analytic zero divisor conjecture.

\end{cor}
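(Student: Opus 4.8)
The plan is to exhibit such a group $\Gamma$ as acting on a suitable tree and then invoke the preceding corollary with $n=1$. So let $\mathcal{G}$ be a graph of groups with cyclic edge groups and with every vertex group either a subgroup of $\mathbb{Q}$ or a solvable Baumslag--Solitar group $BS(1,m)$, put $\Gamma=\pi_1(\mathcal{G})$, and assume $\Gamma$ satisfies the analytic zero divisor conjecture. Let $T$ be the associated Bass--Serre tree; after barycentric subdivision if necessary, $T$ is a $1$-dimensional $\Gamma$-CW-complex, and as a tree it is contractible, hence acyclic. Each cell stabiliser is conjugate to a vertex group or to an edge group; subgroups of $\mathbb{Q}$ are abelian and solvable Baumslag--Solitar groups are solvable, so all vertex groups are amenable, and the edge groups, being cyclic, are amenable as well. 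Thus $T$ is an acyclic $1$-dimensional $\Gamma$-CW-complex with amenable stabilisers.

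Next I would record that $\mathrm{cd}(\Gamma)\leq 2$. Since the vertex groups are torsion-free, $\Gamma$ is torsion-free, so the cyclic edge groups are trivial or infinite cyclic and have cohomological dimension at most $1$; the vertex groups have cohomological dimension at most $2$ (equal to $2$ for the Baumslag--Solitar groups and at most $1$ for the subgroups of $\mathbb{Q}$). Applying $\mathrm{Ext}^\ast_{\mathbb{Z}\Gamma}(-,M)$ to the short exact sequence $0\to C_1(T)\to C_0(T)\to\mathbb{Z}\to 0$ of $\mathbb{Z}\Gamma$-modules coming from the acyclicity of $T$, and using Shapiro's lemma to identify the terms with products of $H^\ast(\Gamma_v;M)$ and $H^\ast(\Gamma_e;M)$, one obtains a Mayer--Vietoris type sequence that forces $H^k(\Gamma;M)=0$ for all $M$ and all $k\geq 3$; hence $\mathrm{cd}(\Gamma)\leq 2$, and the same bound holds for every subgroup of $\Gamma$.

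Finally, let $G\leq\Gamma$ be finitely generated. If $\mathrm{cd}(G)\leq 1$ then $G$ is free of finite rank by the Stallings--Swan theorem, hence finitely presented and in particular of type $FP_2$. If $\mathrm{cd}(G)=2$, then $G$ is of type $FP_1$ (being finitely generated), and $G$ satisfies the analytic zero divisor conjecture because $\Gamma$ does: for $\alpha\in\mathbb{C}[G]\setminus\{0\}$ and $\beta\in l^2(G)\setminus\{0\}$, the product $\alpha\beta$ formed in $l^2(\Gamma)$ is supported on $G$ and hence equals the product in $l^2(G)$, so it is nonzero. The preceding corollary, applied to $\Gamma$ with its action on $T$ (so $n=1$), then shows that $G$ is of type $FP$, in particular of type $FP_2$. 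As every finitely generated subgroup of $\Gamma$ is of type $FP_2$, $\Gamma$ is almost coherent. The argument is essentially a packaging of the preceding corollary, so there is no serious obstacle; the only points needing a moment's care are the estimate $\mathrm{cd}(\Gamma)\leq 2$, which is exactly where the hypothesis that the edge groups are cyclic is used, and the stability of the analytic zero divisor conjecture under passage to subgroups.
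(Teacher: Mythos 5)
Your proposal is correct and follows essentially the same route as the paper: act on the Bass--Serre tree, bound $\mathrm{cd}(\Gamma)$ by $2$ using the stabiliser (Mayer--Vietoris) sequence with cyclic edge and amenable vertex stabilisers, and apply the preceding corollary with $n=1$ to finitely generated subgroups. You merely spell out two points the paper leaves implicit, namely that finitely generated subgroups of cohomological dimension at most one are free (hence $FP_2$) and that the analytic zero divisor conjecture passes to subgroups, both of which are handled correctly.
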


A group $G$ is said to commensurate a subgroup $H$ if $H^{g}\cap H$ has finite index in both $H$ and $H^{g}$ for every element $g \in G$. In particular, if $H$ is normal in $G$ then $G$ commensurates $H$. Using Theorem $C$, 
we prove the following.

\begin{cor} Let $G$ be a group that satisfies the analytic zero-divisor conjecture and commensurates a non-trivial amenable subgroup. If $G$ has cohomological dimension $n$ and is of type $FP_{n-1}$, then $G$ is of type $FP$. 

\end{cor}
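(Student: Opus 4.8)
The plan is to deduce the statement directly from Theorem~C by manufacturing a suitable acyclic $G$-CW-complex out of the commensurated subgroup. Let $H$ be the non-trivial amenable subgroup that $G$ commensurates. Since $G$ has finite cohomological dimension it is torsion-free, hence so is $H$, and therefore $H$ is infinite. I would take $X$ to be the barycentric subdivision of the simplicial complex with vertex set $G/H$ in which every non-empty finite subset of $G/H$ spans a simplex. The left multiplication action of $G$ on $G/H$ permutes the simplices of this complex, and after passing to the barycentric subdivision the induced action makes $X$ into a $G$-CW-complex. A full simplex on an arbitrary set is contractible --- any map of a sphere into it has image in a finite subcomplex, which is a genuine simplex and hence a cone --- so $X$ is in particular acyclic.

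It then remains to verify the stabiliser conditions in Theorem~C. A cell of $X$ corresponds to a chain $A_{0}\subsetneq\cdots\subsetneq A_{m}$ of non-empty finite subsets of $G/H$, and its pointwise stabiliser equals $\bigcap_{j}\mathrm{Stab}_{G}(A_{j})$, where $\mathrm{Stab}_{G}(A_{j})$ denotes the setwise stabiliser. Each $\mathrm{Stab}_{G}(A_{j})$ sits in a short exact sequence with a finite quotient (a subgroup of the symmetric group on the finite set $A_{j}$) and kernel $\bigcap_{gH\in A_{j}}gHg^{-1}$, which is a subgroup of a conjugate of the amenable group $H$; hence $\mathrm{Stab}_{G}(A_{j})$, and a fortiori every cell stabiliser, is amenable. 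For the remaining hypothesis it suffices to show that $X$ has no free cells at all. Since the stabiliser of the chain above contains $\bigcap_{gH\in A_{m}}gHg^{-1}$, I only need to know that a finite intersection $\bigcap_{i=0}^{k}g_{i}Hg_{i}^{-1}$ is always non-trivial; conjugating by $g_{0}^{-1}$, this amounts to showing that $H\cap\bigcap_{i=1}^{k}u_{i}Hu_{i}^{-1}$ is non-trivial, where $u_{i}=g_{0}^{-1}g_{i}$. But commensuration of $H$ guarantees that each $u_{i}Hu_{i}^{-1}\cap H$ has finite index in $H$, so the same holds for their finite intersection, which is therefore infinite because $H$ is.

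With $X$ in hand the argument finishes at once: $G$ satisfies the analytic zero-divisor conjecture, has cohomological dimension $n$, is of type $FP_{n-1}$, and acts on the acyclic $G$-CW-complex $X$ with amenable stabilisers and without a free $G$-orbit of $n$-cells (indeed without any free cells), so Theorem~C applies and $G$ is of type $FP$. The only point that calls for a little care is the $G$-CW bookkeeping --- checking that the action on the full simplex becomes cellular and rigid after subdivision, and that the setwise stabiliser is the correct group to control --- but this is routine, and there is no genuine obstacle: the whole content of the proof is the elementary observation that commensuration, together with torsion-freeness, forces every finite intersection of conjugates of $H$ to remain infinite.
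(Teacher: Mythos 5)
Your proposal is correct and takes essentially the same route as the paper: the paper also deduces the corollary from Theorem~C by building a concrete acyclic $G$-CW-complex out of cosets of $H$ (there, the infinite join of the $G$-set $\coprod_{g\in G}G/H^{g}$ with itself), whose cell stabilisers are finite intersections of conjugates of $H$ and hence amenable and non-trivial by commensuration. Your full simplex on $G/H$ with barycentric subdivision is only a minor variant of that model; the one extra step it requires, that the setwise stabilisers are (finite intersections of) amenable-by-finite groups, you handle correctly, as you do the non-triviality via torsion-freeness of $G$.
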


\noindent \textbf{Acknowledgements.} The author is grateful to Peter Kropholler for introducing him to the question posed in this paper and for valuable discussions and remarks concerning a preliminary version of this work. This 
research was partially supported by the Danish National Research Foundation through the Centre for Symmetry and Deformation (DNRF92).

\section{Finiteness conditions}

We start by recalling some basic facts about homological finiteness properties of groups and refer the reader to \cite{BieriBook}, \cite{Brown} and \cite{Weibel} for more details. Let $G$ be a group, let $R$ be a commutative ring with unit 
and let $R[G]$ denote the group ring of $G$ with coefficients in $R$. In this paper we will always consider left $R[G]$-modules, unless stated otherwise. If there is no mention of a ring $R$, then $R$ is assumed to equal $\mathbb{Z}$.

The cohomological dimension $\mathrm{cd}_R(G)$ of $G$ over $R$ is by definition the length of the shortest projective $R[G]$-resolution of the trivial $R[G]$-module $R$. If there does not exist such a finite length projective 
resolution, then the cohomological dimension is by definition infinite. Equivalently, $\mathrm{cd}_R(G)$ is the largest integer $n\geq 0$ for which there exists an $R[G]$-module $M$ such that the $n$-th cohomology group $\mathrm{H}
^n(G,M)$ is non-zero. The homological dimension $\mathrm{hd}_R(G)$ of $G$ over $R$ is by definition the length of the shortest flat $R[G]$-resolution over the trivial $R[G]$-module $R$. Again, if there does not exist a finite length 
flat resolution then the homological dimension is by definition infinite. Equivalently, $\mathrm{hd}_R(G)$ is the largest integer $n\geq 0$ for which there exists a right $R[G]$-module $M$ such that the $n$-th homology group $
\mathrm{H}_n(G,M)$ is non-zero. In general one has $\mathrm{hd}_R(G)\leq \mathrm{cd}_R(G)$, and if $G$ is countable one also has $\mathrm{cd}_R(G)\leq \mathrm{hd}_R(G)+1$ (e.g.~see \cite[Th 4.6]{BieriBook}). Note that $
\mathrm{cd}_{R}(G)\leq \mathrm{cd}(G)$, and that $\mathrm{cd}(G)<\infty$ implies that $G$ is torsion-free.

The group $G$ is said to be of type $FP_n$ over $R$ for an integer $n\geq 0$ if and only if there exists a projective $R[G]$-resolution $P_{\ast}$ of the trivial $R[G]$-module $R$ such that $P_d$ is a finitely generated $R[G]$-module 
for all $ 0\leq d \leq n$. The group $G$ is said to be of type $FP$ over $R$ if and only if there exists a finite length projective $R[G]$-resolution $P_{\ast}$ of the trivial $R[G]$-module $R$ such that $P_d$ is a finitely generated $R[G]$-
module for all $d\geq 0$. Every group $G$ is of type $FP_0$ over $R$ and $G$ is of type $FP_1$ over $R$ if and only if $G$ is finitely generated.  If $G$ is finitely presented then $G$ is of type $FP_2$ over $R$ but not necessarily 
the other way around (see \cite[Prop. 2.1 and 2.2]{BieriBook} and \cite{BestvinaBrady}). Note also that if $G$ is of type $FP_n$ over $R$ and $\mathrm{cd}_R(G)=n$, then $G$ is of type $FP$ over $R$ (e.g.~see \cite[VIII Prop. 6.1]
{Brown}). \\

The following lemma is a variation on a classical criterion due to Strebel implying that a group is of type $FP_n$. It will be well-known to experts.

\begin{lemma} \label{lemma: direct limit} Let $R$ be a commutative ring with unit and let $G$ be a group with $\mathrm{cd}_R(G)=n$ that is type $FP_{n-1}$ over $R$. Then $G$ is of type $FP$ over $R$ if and only if for any directed 
system of free $R[G]$-modules $\{F_i\}_{i \in I}$ with $\varinjlim{F_i}= 0$ one has 
\[      \varinjlim{\mathrm{H}^n(G,F_i)}=0.     \] 

\end{lemma}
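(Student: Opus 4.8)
The plan is to reduce the assertion to a transparent question about a single projective module. Since $G$ is of type $FP_{n-1}$ over $R$, I would fix a projective $R[G]$-resolution $P_{\ast}\to R$ in which $P_0,\dots,P_{n-1}$ are finitely generated, and put $K=\ker(P_{n-1}\to P_{n-2})$ (with the convention $P_{-1}=R$ when $n=1$). As $\mathrm{cd}_R(G)=n$, the trivial module $R$ has projective dimension $n$ over $R[G]$, so the $n$-th syzygy $K$ is projective and
$$0\to K\to P_{n-1}\to\cdots\to P_0\to R\to 0$$
is a projective resolution of length $n$ all of whose terms, except possibly $K$, are finitely generated. Hence $G$ is of type $FP$ over $R$ if and only if $K$ is finitely generated: if $K$ is finitely generated the displayed resolution exhibits $FP$, while conversely $FP$ implies $FP_n$, and the standard lemma on partial projective resolutions (e.g.\ \cite[Ch.~VIII]{Brown}) then forces the $n$-th kernel $K$ of our resolution to be finitely generated.

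Next I would compute $\varinjlim_i\mathrm{H}^n(G,F_i)$. Applying $\mathrm{Hom}_{R[G]}(-,M)$ to the resolution above yields, naturally in $M$, an exact sequence
$$\mathrm{Hom}_{R[G]}(P_{n-1},M)\longrightarrow\mathrm{Hom}_{R[G]}(K,M)\longrightarrow\mathrm{H}^n(G,M)\longrightarrow 0 .$$
Direct limits are exact, and $P_{n-1}$ is finitely generated projective, so $\mathrm{Hom}_{R[G]}(P_{n-1},-)$ commutes with direct limits; thus for any directed system of $R[G]$-modules $\{F_i\}_{i\in I}$ with $\varinjlim_i F_i=0$ we get $\varinjlim_i\mathrm{Hom}_{R[G]}(P_{n-1},F_i)=\mathrm{Hom}_{R[G]}(P_{n-1},0)=0$, and therefore a natural isomorphism $\varinjlim_i\mathrm{H}^n(G,F_i)\cong\varinjlim_i\mathrm{Hom}_{R[G]}(K,F_i)$. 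In view of the previous paragraph, the lemma is now equivalent to the statement that $K$ is finitely generated if and only if $\varinjlim_i\mathrm{Hom}_{R[G]}(K,F_i)=0$ for every directed system of free $R[G]$-modules with $\varinjlim_i F_i=0$.

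The forward implication is immediate: a finitely generated projective module $K$ has $\mathrm{Hom}_{R[G]}(K,-)$ commuting with direct limits, so $\varinjlim_i\mathrm{Hom}_{R[G]}(K,F_i)=\mathrm{Hom}_{R[G]}(K,0)=0$. For the converse — which I expect to be the only substantial point — I would argue contrapositively and produce an explicit witnessing system. Write $K$ as a direct summand of a free module $F=\bigoplus_{b\in B}R[G]b$, with retraction $q\colon F\to K$ restricting to the identity on $K$. Order the finite subsets $S\subseteq B$ by inclusion; for each such $S$ let $F_S$ be the free module on $B\setminus S$ and let $F_S\to F_{S'}$ for $S\subseteq S'$ be the evident coordinate projection. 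This is a directed system, and $\varinjlim_S F_S=0$ because any element of any $F_S$ involves only finitely many basis vectors and so is killed once $S$ is enlarged to contain them. The projections $F\to F_S$ restrict along $K\hookrightarrow F$ to a compatible family $\phi_S\in\mathrm{Hom}_{R[G]}(K,F_S)$, and hence determine an element $\phi\in\varinjlim_S\mathrm{Hom}_{R[G]}(K,F_S)$. If $\phi$ vanished, then $\phi_{S'}=0$ for some finite $S'$, i.e.\ $K$ would be contained in the submodule $\langle S'\rangle\subseteq F$ spanned by $S'$; applying $q$ would give $K=q(K)=q(\langle S'\rangle)$, finitely generated, contradicting our assumption. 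So $\varinjlim_S\mathrm{Hom}_{R[G]}(K,F_S)\neq 0$, which completes the argument. The point to be careful about is exactly this last step: the ring $R[G]$ is typically not Noetherian, so one cannot deduce finite generation of $K$ merely from its being a submodule of a finitely generated module — it is essential that the retraction $q$ realises $K$ as a quotient, not just a subobject, of the finitely generated module $\langle S'\rangle$.
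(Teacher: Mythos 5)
Your proof is correct, but it takes a genuinely different route from the paper's. The paper simply quotes Strebel's criterion in the form of \cite[Th. 1.3]{BieriBook}: since $G$ is $FP_{n-1}$ over $R$ and $\mathrm{cd}_R(G)=n$, type $FP$ is equivalent to $\varinjlim \mathrm{H}^n(G,M_i)=0$ for every vanishing directed system of \emph{arbitrary} $R[G]$-modules, and the entire content of the paper's proof is the reduction of arbitrary systems to free ones: each $M_i$ is covered functorially by the canonical free module $F_i=\bigoplus_{m\in M_i\setminus\{0\}}R[G]$, the resulting $\{F_i\}$ again has vanishing limit, and $\mathrm{cd}_R(G)=n$ makes $\mathrm{H}^n(G,F_i)\to\mathrm{H}^n(G,M_i)$ surjective, a surjectivity that survives the exact direct limit. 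You instead pass to the $n$-th syzygy $K$ (projective because $\mathrm{cd}_R(G)=n$) of a resolution finitely generated through degree $n-1$, identify $\varinjlim\mathrm{H}^n(G,F_i)$ with $\varinjlim\mathrm{Hom}_{R[G]}(K,F_i)$, and prove directly that this vanishes for all vanishing free systems precisely when $K$ is finitely generated; the nontrivial direction is your explicit witnessing system $\{F_S\}$ indexed by finite subsets of a basis, where the retraction onto $K$ is exactly what rescues finite generation over the non-Noetherian ring $R[G]$, a point you rightly flag. In effect you re-prove the relevant instance of the Bieri--Strebel criterion (for free coefficient systems) rather than citing it, at the cost of invoking the standard fact that $FP_n$ forces the kernel of a finitely generated partial resolution to be finitely generated (available in \cite{Brown}); this buys a self-contained argument with a concrete system witnessing failure, while the paper's argument is shorter modulo the citation and additionally shows that the free-system condition is equivalent to the a priori stronger condition over arbitrary module systems.
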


\begin{proof} A classical result of Strebel (e.g.~see \cite[Th. 1.3.]{BieriBook}) implies that if $G$ is of type $FP_{n-1}$ over $R$ and $\mathrm{cd}_R(G)=n$ then  $G$ is of type $FP$ if and only if $\varinjlim{\mathrm{H}^n(G,M_i)}=0$ 
for any directed system of $R[G]$-modules $\{M_i\}_{i \in I}$ with $\varinjlim{M_i}= 0$. Let $\{M_i\}_{i \in I}$ be such a directed system. Now for each $i \in I$, consider the free $R[G]$-module 
\[   F_i = \bigoplus_{m \in M_i \setminus \{0\}} R[G]  \]
and the surjection of $R[G]$-modules
\[    \pi_i : F_i \rightarrow  M_i : e_m \mapsto m     \]
where $e_m$ denotes the identity element in the $R[G]$-factor corresponding to $m\in M$. Given $i\leq j$ in $I$ we have the corresponding $R[G]$-homomorphism $\varphi_{i,j}: M_{i}\rightarrow M_j$. Define the $R[G]$-
homomorphism 
\[      \theta_{i,j}: F_i \rightarrow F_j   :  e_m \mapsto \Big\{\begin{array}{cc}
 e_{\varphi_{i,j}(m)} & \mbox{if $\varphi_{i,j}(m)\neq 0 $} \\
0 & \mbox{otherwise.}
\end{array}  \]
One checks that the maps $\theta_{i,j}$ assemble to form a directed system $\{F_i\}_{i\in I}$ with vanishing direct limit and that 
$$\{\pi_i\}_{i \in I}: \{F_i\}_{i \in I} \rightarrow \{M_i\}_{i \in I} $$
is a surjective map of directed systems, i.e.~$\varphi_{i,j}\circ \pi_i = \pi_j \circ \theta_{i,j}$ for all $i\leq j$ in $I$. Since $\mathrm{cd}_R(G)=n$ we obtain from a collection of natural long exact cohomology sequences, a surjective map 
of directed systems
\[     \{\mathrm{H}^n(G,F_i)\}_{i \in I} \rightarrow   \{\mathrm{H}^n(G,M_i)\}_{i \in I} \rightarrow 0.  \]
Since taking direct limits is exact, there is a surjection
\[     \varinjlim{\mathrm{H}^n(G,F_i)} \rightarrow   \varinjlim{\mathrm{H}^n(G,M_i)} \rightarrow 0,      \]
from which the lemma easily follows.

\end{proof}
Recall that two groups are said to be abstractly commensurable if they have isomorphic subgroups of finite index. In the introduction we mentioned that non locally-finite groups $G$ that are abstractly commensurable to a direct product 
$G^n$ for some $n\geq 2$, have infinite cohomological dimension over any field $k$. The following lemma will allow us to prove this in the corollary below.
\begin{lemma} Let $k$ be a field and let $G_i$ be a group with $\mathrm{hd}_k(G_i)=n_i$ for $i=1,2$, then $\mathrm{hd}_k(G_1\times G_2)=n_1+n_2$. 

\end{lemma}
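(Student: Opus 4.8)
The plan is to bootstrap the statement from the algebraic Künneth theorem over the field $k$, using the canonical isomorphism of $k$-algebras $k[G_1\times G_2]\cong k[G_1]\otimes_k k[G_2]$.

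For the inequality $\mathrm{hd}_k(G_1\times G_2)\leq n_1+n_2$ I would fix flat resolutions $P_{\ast}\to k$ over $k[G_1]$ and $Q_{\ast}\to k$ over $k[G_2]$ of lengths $n_1$ and $n_2$ respectively, and form the tensor product complex $C_{\ast}:=P_{\ast}\otimes_k Q_{\ast}$, regarded as a complex of $k[G_1\times G_2]$-modules via the identification above; it is concentrated in degrees $0,\dots,n_1+n_2$. Each term $P_p\otimes_k Q_q$ is a flat $k[G_1\times G_2]$-module: by Lazard's theorem each $P_{\ast}$, $Q_{\ast}$ is a filtered colimit of finitely generated free modules over $k[G_1]$, $k[G_2]$, hence $P_p\otimes_k Q_q$ is a filtered colimit of finitely generated free $k[G_1\times G_2]$-modules and therefore flat. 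Since $k$ is a field, the Künneth theorem identifies $\mathrm{H}_m(C_{\ast})$ with $\bigoplus_{p+q=m}\mathrm{H}_p(P_{\ast})\otimes_k\mathrm{H}_q(Q_{\ast})$, which is $k$ when $m=0$ and $0$ otherwise; thus $C_{\ast}\to k$ is a flat $k[G_1\times G_2]$-resolution of the trivial module of length $n_1+n_2$, giving the upper bound.

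For the reverse inequality I would choose, using the definition of $\mathrm{hd}_k(G_i)=n_i$, a right $k[G_1]$-module $A$ with $\mathrm{H}_{n_1}(G_1,A)\neq 0$ and a right $k[G_2]$-module $B$ with $\mathrm{H}_{n_2}(G_2,B)\neq 0$, and set $M:=A\otimes_k B$, a right $k[G_1\times G_2]$-module. Computing $\mathrm{H}_{\ast}(G_1\times G_2,M)=\mathrm{Tor}^{k[G_1\times G_2]}_{\ast}(M,k)$ with the flat resolution $C_{\ast}$ and invoking the standard natural isomorphism
\[
(A\otimes_k B)\otimes_{k[G_1\times G_2]}\bigl(P_{\ast}\otimes_k Q_{\ast}\bigr)\;\cong\;\bigl(A\otimes_{k[G_1]}P_{\ast}\bigr)\otimes_k\bigl(B\otimes_{k[G_2]}Q_{\ast}\bigr),
\]
a second application of the Künneth theorem over $k$ gives $\mathrm{H}_m(G_1\times G_2,M)\cong\bigoplus_{p+q=m}\mathrm{H}_p(G_1,A)\otimes_k\mathrm{H}_q(G_2,B)$. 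For $m=n_1+n_2$ the vanishing of $\mathrm{H}_p(G_1,A)$ for $p>n_1$ and of $\mathrm{H}_q(G_2,B)$ for $q>n_2$ leaves only the summand $\mathrm{H}_{n_1}(G_1,A)\otimes_k\mathrm{H}_{n_2}(G_2,B)$, which is non-zero as a $k$-tensor product of two non-zero vector spaces. Hence $\mathrm{H}_{n_1+n_2}(G_1\times G_2,M)\neq 0$, and the two bounds combine to the asserted equality.

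I do not anticipate a genuine obstacle here: the points demanding attention are the flatness of the modules $P_p\otimes_k Q_q$ over $k[G_1\times G_2]$, which is dispatched cleanly via Lazard's characterisation of flat modules, and the naturality of the two Künneth isomorphisms and of the iterated-tensor identity above, all of which is routine once one observes that working over a field makes every higher $\mathrm{Tor}$-term in the Künneth sequences vanish. I would also remark in passing that running the second argument with $B=k$ and an arbitrary, possibly infinite-length, flat resolution of $k$ over $k[G_2]$ shows $\mathrm{hd}_k(G_1\times G_2)=\infty$ as soon as $\mathrm{hd}_k(G_1)=\infty$, so the identity persists in $[0,\infty]$.
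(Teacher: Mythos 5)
Your proof is correct, but it follows a different route from the paper. The paper's argument runs the Lyndon--Hochschild--Serre spectral sequence of $1 \to G_1 \to G_1\times G_2 \to G_2 \to 1$ with coefficients in $M_1\otimes_k M_2$: the upper bound and the survival of the corner term $E^2_{n_2,n_1}$ are absorbed into the phrase ``it suffices to show'', and the actual computation is the identification $\mathrm{H}_{n_1}(G_1,M_1\otimes_k M_2)\cong \mathrm{H}_{n_1}(G_1,M_1)\otimes_k M_2$ (and its analogue for $G_2$), using exactness of $-\otimes_k M_2$ over the field $k$. You instead build an explicit flat $k[G_1\times G_2]$-resolution $P_\ast\otimes_k Q_\ast$ of $k$ (with Lazard's theorem supplying flatness of each $P_p\otimes_k Q_q$, a point worth making since these terms need not be projective) and apply the K\"unneth theorem twice. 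The two proofs exploit the same essential fact --- exactness of $-\otimes_k$ over a field --- and the same test module $A\otimes_k B$, but your version avoids spectral sequences entirely, makes the inequality $\mathrm{hd}_k(G_1\times G_2)\le n_1+n_2$ explicit rather than implicit, and yields the full K\"unneth computation of $\mathrm{H}_*(G_1\times G_2,A\otimes_k B)$ in every degree; the paper's version is shorter and leans on standard machinery (subadditivity of $\mathrm{hd}$ under extensions plus the corner argument). Your closing remark on the infinite-dimensional case is also fine, though it is not needed for the lemma as stated and follows anyway from monotonicity of $\mathrm{hd}_k$ under passage to the subgroup $G_1$.
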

\begin{proof} Let $M_i$ be a right $k[G_i]$-module such that $\mathrm{H}_{n_i}(G_i,M_i)\neq 0$ for $i=1,2$ and consider right the $k[G_1\times G_2]$-module $M_1\otimes_k M_2$. By the Lyndon-Hochschild-Serre spectral sequence 
associated to the extension 
\[     1 \rightarrow G_1 \rightarrow G_1 \times G_2 \rightarrow G_2 \rightarrow 1   \]
it suffices to show that 
\[       \mathrm{H}_{n_2}(G_2,\mathrm{H}_{n_1}(G_1,M_1\otimes_k M_2)) \neq 0.   \]
Let $P_{\ast}$ be  a projective $k[G_1]$-resolution of the trivial $k[G_1]$-module $k$. Since
\[    (M_1 \otimes_k M_2)\otimes_{k[G_1]} P_{\ast}   \cong  (M_1 \otimes_{k[G_1]} P_{\ast} ) \otimes_k M_2 \] and $-\otimes_k M_2$ is an exact functor because $k$ is a field, we have
\[        \mathrm{H}_{n_1}(G_1,M_1\otimes_k M_2)\cong  \mathrm{H}_{n_1}(G_1,M_1 )\otimes_k M_2.   \]
A similar reasoning shows that
\[   \mathrm{H}_{n_2}(G_2, \mathrm{H}_{n_1}(G_1,M_1 )\otimes_k M_2)\cong \mathrm{H}_{n_1}(G_1,M_1)\otimes_k \mathrm{H}_{n_2}(G_2,M_2).      \]
We conclude that 
\[  \mathrm{H}_{n_2}(G_2,\mathrm{H}_{n_1}(G_1,M_1\otimes_k M_2))  \cong \mathrm{H}_{n_1}(G_1,M_1)\otimes_k \mathrm{H}_{n_2}(G_2,M_2) \neq 0,\]
proving the lemma.

\end{proof}

\begin{corollary}\label{cor: comm} Let $G$ be a group that is not locally finite. If $G$ is abstractly commensurable to a direct product $G^n$ for some $n\geq 2$, then $\mathrm{hd}_k(G)$, and hence also $\mathrm{cd}_k(G)$, is infinite 
for any field $k$. 
\end{corollary}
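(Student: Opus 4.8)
The plan is to assume $\mathrm{hd}_k(G)<\infty$ and derive a contradiction; since $\mathrm{hd}_k(G)\le\mathrm{cd}_k(G)$, this simultaneously settles the statement about cohomological dimension. First I would record two standard facts. The first is that homological dimension over a field does not change under passage to a subgroup of finite index, as long as it is finite: if $K$ is a subgroup of finite index in a group $\Gamma$ with $\mathrm{hd}_k(\Gamma)=m<\infty$, then $\mathrm{hd}_k(K)=m$. Here $\mathrm{hd}_k(K)\le m$ is clear, because $k[\Gamma]$ is free over $k[K]$, so a flat $k[\Gamma]$-resolution of $k$ of length $m$ restricts to one over $k[K]$. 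For $\mathrm{hd}_k(K)\ge m$, pick a right $k[\Gamma]$-module $M$ with $\mathrm{H}_m(\Gamma,M)\ne 0$. By the tensor identity $M\otimes_{k[K]}k[\Gamma]\cong M\otimes_k k[\Gamma/K]$ and Shapiro's lemma one has $\mathrm{H}_m(\Gamma, M\otimes_k k[\Gamma/K])\cong\mathrm{H}_m(K,M)$, where $M$ is viewed as a $k[K]$-module on the right-hand side. Tensoring the $\Gamma$-equivariant injection $k\hookrightarrow k[\Gamma/K]$ that sends $1$ to the sum of all cosets --- this is a nonzero $\Gamma$-invariant element, so because $k$ is a field it spans a copy of the trivial module --- with $M$ over $k$ gives a short exact sequence $0\to M\to M\otimes_k k[\Gamma/K]\to C\to 0$ of $k[\Gamma]$-modules; since $\mathrm{hd}_k(\Gamma)=m$ we have $\mathrm{H}_{m+1}(\Gamma,C)=0$, so the long exact homology sequence yields an injection $\mathrm{H}_m(\Gamma,M)\hookrightarrow\mathrm{H}_m(\Gamma, M\otimes_k k[\Gamma/K])$. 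Hence $\mathrm{H}_m(K,M)\ne 0$ and $\mathrm{hd}_k(K)\ge m$. (This is a homological analogue of Serre's theorem and could also simply be cited.) The second fact is the classical statement that a group of homological dimension zero over a field is locally finite.

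Granting these, suppose $\mathrm{hd}_k(G)=d<\infty$. Since $G$ is not locally finite, the second fact forces $d\ge 1$. Applying the preceding Lemma $n-1$ times, and noting that all dimensions involved remain finite, we get $\mathrm{hd}_k(G^n)=nd$. As $G$ is abstractly commensurable to $G^n$, there are a subgroup $H$ of finite index in $G$ and a subgroup $K$ of finite index in $G^n$ with $H\cong K$. Finite-index invariance applied to $K\le G^n$ gives $\mathrm{hd}_k(K)=nd$, while monotonicity of homological dimension under subgroups gives $\mathrm{hd}_k(H)\le\mathrm{hd}_k(G)=d$. Since $H\cong K$ we deduce $nd\le d$, which is impossible because $n\ge 2$ and $d\ge 1$. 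Therefore $\mathrm{hd}_k(G)=\infty$, whence $\mathrm{cd}_k(G)=\infty$ as well.

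The step I expect to be the main obstacle is the finite-index invariance of homological dimension over $k$ --- concretely, the inequality $\mathrm{hd}_k(K)\ge\mathrm{hd}_k(\Gamma)$ --- because the customary transfer argument delivers it only when $[\Gamma:K]$ is invertible in $k$, so positive characteristic needs the separate argument sketched above. The key observation there is that one does not need $k$ to split off from $k[\Gamma/K]$ as a $k[\Gamma]$-module: the bare $\Gamma$-equivariant injection $k\hookrightarrow k[\Gamma/K]$, together with the vanishing of $\mathrm{H}_{m+1}(\Gamma,-)$, already transports nonvanishing of $m$-th homology from $\Gamma$ to $K$. The remaining points are routine: that the iterated application of the Lemma is legitimate since $d,2d,\dots,nd$ are all finite, and the unwinding of the definition of abstract commensurability.
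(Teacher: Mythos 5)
Your proposal is correct and follows essentially the same route as the paper: both arguments combine the preceding product lemma $\mathrm{hd}_k(G_1\times G_2)=\mathrm{hd}_k(G_1)+\mathrm{hd}_k(G_2)$ with finite-index invariance of $\mathrm{hd}_k$ and the fact that $\mathrm{hd}_k=0$ forces local finiteness, to reach the same numerical contradiction. The only differences are cosmetic: the paper simply cites Bieri (Cor.\ 5.10 and Prop.\ 4.12(b)) for the two auxiliary facts that you prove or invoke directly (your Shapiro-plus-long-exact-sequence argument for $\mathrm{hd}_k(K)\geq\mathrm{hd}_k(\Gamma)$ is a valid substitute for the citation), and you use subgroup monotonicity on the $G$-side where the paper uses finite-index equality on both sides.
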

\begin{proof} Suppose $\mathrm{hd}_k(G)<\infty$. Then $\mathrm{hd}_k(G^n)< \infty$ and we have 
$\mathrm{hd}_k(G)=\mathrm{hd}_k(K)$ and $\mathrm{hd}_k(G^n)=\mathrm{hd}_k(L)$
for every finite index subgroup $K$ of $G$ and finite index subgroup $L$ of $G^n$ by \cite[Cor. 5.10]{BieriBook}. Since $G$ is abstractly commensurable with $G^n$, this implies that 
$\mathrm{hd}_k(G)=\mathrm{hd}_k(G^n)$. But then the lemma above says that
$n(\mathrm{hd}_k(G))=\mathrm{hd}_k(G^n)=\mathrm{hd}_k(G)$ for some $n\geq 2$, which implies that $\mathrm{hd}_k(G)=0$. We now conclude from \cite[Prop. 4.12(b)]{BieriBook} that $G$ is locally finite, which is a contradiction. 
This proves that $\mathrm{hd}_k(G)= \infty$.

\end{proof}

\section{Non-commutative localisation}
Later on we will need to embed the group ring $\mathbb{Z}[G]$ of a certain group $G$, into a ring $D$ such that every non-zero element of $\mathbb{Z}[G]$ is invertible in $D$.  In this section we recall how to construct such a ring $D$, assuming $G$ satisfies the assumptions of either Theorem A or Theorem B. \\

We start by briefly reviewing the Ore condition and the process of Ore localisation and refer the reader to \cite[4.10]{Lam} for details and proofs. Let $R$ be a ring with a unit and let $S$ be a multiplicatively closed subset of $R$ that 
contains the unit of $R$ but does not contain any zero divisors of $R$. The set $S$ is said to satisfy the left Ore condition with respect to $R$ if for every $s \in S$ and every $a \in R$, there exists $t \in S$ and $b \in R$ such that 
$ta=bs$. If $S$ satisfies the left Ore condition with respect to $R$ one can construct the ring $$S^{-1}R = S\times R / \sim$$
where $(s,a)\sim (s',a')$ if there exist $b,b' \in R$ such that $bs=b's' \in S$ and $ba=b'a' \in R$. One denotes an equivalence class containing $(s,a)$ suggestively by $\frac{a}{s}$ and defines
\[   \frac{a_1}{s_1}+\frac{a_2}{s_2}=\frac{sa_1+ra_2}{ss_1}     \]
where $r\in R$ and $s\in S$ are such that $r s_2=ss_1\in S$, and
\[     \frac{a_1}{s_1}\cdot \frac{a_2}{s_2}= \frac{ra_2}{ss_1} \]
where $r\in R$ and $s\in S$ are such that $r s_2=sa_1\in R$. One can check that this turn $S^{-1}R$ into a ring equipped with an injective ring homomorphism
\[  \varphi: R \rightarrow S^{-1}R : r \mapsto (1,r)  \]
such that the image under $\varphi$ of every element in $S$ is invertible in $S^{-1}R$. We will identify $R$ with its image $\varphi(R)$ in $S^{-1}R$. The ring $S^{-1}R$ is called the left Ore localisation of $R$ at $S$. Finally, note that 
$S^{-1}R$ is a flat right $R$-module and that we can also localise any left $R$-module $M$ at $S$ by defining
\[     S^{-1}M = S^{-1}R \otimes_R M.    \]
If $R$ is a domain and the set of all non-zero elements of $R$ satisfies the left Ore condition with respect to $R$, then $R$ is called a left Ore domain. \\

Returning to our specific setting, note that if $G$ is a torsion-free group such that $k[G]$ is a domain for a field $k$, then $k[G]$ is a left Ore domain if $G$ is amenable by \cite{Tamari}. In fact, this is an if and only if by the appendix of 
\cite{B} due to Kielak. When exploring these references, note that one can also consider the `right' Ore condition and all corresponding `right' notions but that for group rings (or more generally rings with an involution), the left Ore 
condition and the right Ore condition are equivalent and the corresponding localisations are isomorphic. For a group $G$ one easily verifies that $\mathbb{Z}[G]$ is a left Ore domain if and only if $\mathbb{Q}[G]$ is a left Ore domain. We may therefore  conclude that if $G$ is an amenable group such that $\mathbb{Z}[G]$ is a domain, then $\mathbb{Z}[G]$ is a left Ore domain and can therefore be embedded in a ring $D=S^{-1}\mathbb{Z}[G]$ ($S=\mathbb{Z}[G]\setminus \{0\}$) such that every non-zero element of $\mathbb{Z}[G]$ is invertible in $D$. This deals with the setup of Theorem $A$. In Theorem $C$ however, $G$ does not need to be amenable. Next, we explain how to proceed in that situation. 

Let $G$ be a torsion-free group that satisfies the analytic zero divisor conjecture and consider the Hilbert spaces

\[  l^2(G)= \{ \sum_{g \in G} a_g g \ (a_g \in \mathbb{C}) \ | \ \sum_{g \in G} |a_g|^2 <\infty\} \]
and 
\[  l^{\infty}(G)= \{ \sum_{g \in G} a_g g \ (a_g \in \mathbb{C}) \ | \ \sup_{g \in G} |a_g| <\infty\}. \]
One has the obvious inclusion 
$ \mathbb{C}[G] \subseteq l^2(G) \subseteq l^{\infty}(G)           $
and the natural ring multiplication on $\mathbb{C}[G]$ extends to a map
\[    l^{2}(G)\times l^{2}(G) \rightarrow l^{\infty}(G): (\alpha=\sum_{g \in G} a_g g,\beta=\sum_{h \in G} b_h h) \mapsto \alpha\beta = \sum_{g\in G} \Big( \sum_{h\in H}a_{gh^{-1}}b_h\Big)g.   \]
The group von Neumann algebra $\mathcal{N}(G)$ can be defined as (e.g. see \cite[Section 8]{Linnell}) as
\[   \mathcal{N}(G)= \{ \alpha \in l^{2}(G) \ | \ \alpha\beta \in l^{2}(G), \ \forall \beta \in l^{2}(G)\}     \]
and we have an inclusion of rings 
$\mathbb{C}[G] \subseteq \mathcal{N}(G)$
such that no non-zero element of $\mathbb{C}[G]$ is a zero-divisor in $\mathcal{N}(G)$, by the analytic zero divisor conjecture. Since the set $S$ of non-zero divisors in $\mathcal{N}(G)$ satisfies the left (and right) Ore condition in $
\mathcal{N}(G)$ (e.g. see \cite[Th. 8.22]{LuckL2Book}), one can consider the left (or right) Ore localisation $S^{-1}\mathcal{N}(G)$ which turns out to be isomorphic to the algebra $\mathcal{U}(G)$ of operators affiliated to $\mathcal{N}
(G)$ (e.g.~see \cite[Ch. 8]{LuckL2Book} ). Important for our purposes is that, under the assumptions of Theorem C, we have found a ring $\mathcal{U}(G)$ containing $\mathbb{C}[G]$ (and hence $\mathbb{Z}[G]$) such that every non-
zero element of $\mathbb{C}[G]$ (and hence $\mathbb{Z}[G]$) is invertible in $\mathcal{U}(G)$.

To summarise, if a groups $G$ satisfies the assumptions of either Theorem A or Theorem C, then the group ring $\mathbb{Z}[G]$ can be embedded in a ring $D$ such that every non-zero element of $\mathbb{Z}[G]$ is invertible in $D$. Our reason for needing such an embedding is contained in Lemma \ref{lemma: ore} below, which will be used in the proof of Theorem A and C in the next section.

\begin{lemma} \label{lemma: ore} Let $G$ be a non-trivial amenable group and assume that the group ring $\mathbb{Z}[G]$ can be embedded in a ring $D$ such that every non-zero element of $\mathbb{Z}[G]$ is invertible in $D$. 
Then for any index set $J$, the sum $\bigoplus_{j \in J}D$ is an injective $\mathbb{Z}[G]$-module  and for all $n\geq 0$
\[    \mathrm{H}^n(G,\bigoplus_{j \in J}D)=0 .   \]

\end{lemma}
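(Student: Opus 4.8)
The plan is to show that $D$ is an injective $\mathbb{Z}[G]$-module, and then deduce the statement for arbitrary direct sums $\bigoplus_{j\in J} D$ by exploiting amenability, which will force the relevant cohomology to vanish in a way that survives infinite direct sums. The starting observation is that since every non-zero element of $\mathbb{Z}[G]$ is invertible in $D$, the ring $D$ is in particular a divisible abelian group, hence an injective $\mathbb{Z}$-module; moreover, as a $\mathbb{Z}[G]$-module $D$ is torsion-free and ``uniquely divisible by elements of $\mathbb{Z}[G]$'' in the sense that for every $0\neq r\in \mathbb{Z}[G]$, multiplication by $r$ is a bijection of $D$. By Baer's criterion, to check injectivity of $D$ over $\mathbb{Z}[G]$ it suffices to extend, for each left ideal $I\subseteq \mathbb{Z}[G]$, any homomorphism $I\to D$ to $\mathbb{Z}[G]\to D$; since $\mathbb{Z}[G]$ is a left Ore domain (this is where amenability of $G$ together with $\mathbb{Z}[G]$ being a domain enters, via \cite{Tamari}, and is already recorded in the previous section — and note that the only case where $D$ is built without amenability, namely Theorem C, is not covered by this lemma since there $G$ is assumed amenable), one can argue that $D$ is the classical division ring of fractions $S^{-1}\mathbb{Z}[G]$ or at least receives a map from it, and a standard fact is that the Ore localisation of a left Ore domain at all non-zero elements is a flat, in fact injective, module over the domain. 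Concretely: given $\varphi\colon I\to D$ with $I\neq 0$, pick $0\neq a\in I$; then for any $b\in I$, left Ore gives $s\in S, c\in \mathbb{Z}[G]$ with $sa = cb$... — I would instead phrase injectivity cleanly as: a module over a left Ore domain $R$ is injective iff it is divisible (i.e. $rM=M$ for all $0\ne r\in R$) and ``$R$-torsion-free implies the divisibility is unique'', and $D$ visibly satisfies this. This handles the single-module injectivity claim.

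Next I turn to the direct sum. A direct sum of injective modules need not be injective, so the vanishing of $\mathrm{H}^n(G,\bigoplus_J D)$ for all $n\geq 0$ is \emph{not} immediate from injectivity of $D$; this is the main obstacle. The key input is amenability of $G$ together with $\mathrm{cd}$-type considerations: since $G$ is amenable, $\mathbb{Z}[G]$ is a left Ore domain, and one can show that $\bigoplus_J D$ is itself again divisible and $\mathbb{Z}[G]$-torsion-free (these properties are preserved under arbitrary direct sums because multiplication by a fixed $0\neq r\in\mathbb{Z}[G]$ acts diagonally and is bijective on each summand, hence bijective on the sum). Therefore, by the same criterion used above, $\bigoplus_J D$ is again an injective $\mathbb{Z}[G]$-module. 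This is the crucial point that makes the infinite direct sum work: over a \emph{left Ore domain}, the class of injective modules one is dealing with here — those that are divisible and torsion-free — is closed under arbitrary direct sums, even though the class of \emph{all} injectives need not be. (If one is uneasy about invoking this criterion, an alternative is to note $\bigoplus_J D \hookrightarrow \prod_J D$ and $\prod_J D$ is a product of injectives hence injective, but that does not by itself give injectivity of the subobject; so I would commit to the divisible-and-torsion-free argument.)

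Once $\bigoplus_J D$ is known to be an injective $\mathbb{Z}[G]$-module, the cohomology statement follows immediately: for any injective $\mathbb{Z}[G]$-module $E$ and any group $G$, $\mathrm{H}^n(G,E)=\mathrm{Ext}^n_{\mathbb{Z}[G]}(\mathbb{Z},E)=0$ for all $n\geq 1$, since $E$ has no higher $\mathrm{Ext}$ into it from any module, in particular from the trivial module $\mathbb{Z}$. For $n=0$ we must check $\mathrm{H}^0(G,\bigoplus_J D) = (\bigoplus_J D)^G = 0$; this is where non-triviality of $G$ is used. An element fixed by all of $G$ lies in finitely many summands, so it suffices to see $D^G=0$: if $0\neq d\in D$ satisfies $gd=d$ for all $g\in G$, then $(g-1)d=0$ for every $g$, and picking $g\neq 1$ (possible since $G\neq 1$) gives $(g-1)\in \mathbb{Z}[G]$ a non-zero element annihilating the non-zero element $d$ of $D$, contradicting that $g-1$ is invertible in $D$. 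Hence $(\bigoplus_J D)^G=0$ as well, completing the proof. The one step I expect to require the most care in writing up is the closure-under-direct-sums of injectivity via the divisibility criterion — I would make sure to state precisely the characterisation of injective modules over a left Ore domain that I am using, with a reference (e.g. to \cite{Lam}), rather than re-deriving it.
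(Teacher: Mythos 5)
Your argument is correct and follows essentially the same route as the paper: observe that $\mathbb{Z}[G]$ is a left Ore domain (amenability plus the embedding into $D$), note that $\bigoplus_{j\in J}D$ is itself torsion-free and divisible over $\mathbb{Z}[G]$, invoke the fact that torsion-free divisible modules over a left Ore domain are injective, and kill $\mathrm{H}^0$ using that $g-1$ is invertible in $D$ for $g\neq 1$. The precise reference you were looking for is Van de Water, \emph{A property of torsion-free modules over left Ore domains} (Theorem 1), which is exactly what the paper cites.
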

\begin{proof} Since every non-zero element of $\mathbb{Z}[G]$ is invertible in $D$ and $G$ is amenable, it follows that $\mathbb{Z}[G]$ is an Ore domain and that $\bigoplus_{i \in I}D$ is a torsion-free divisible $\mathbb{Z}[G]$-module. By \cite[Th. 1]
{VanDeWater}, this implies that $\bigoplus_{j \in J}D$ is an injective $\mathbb{Z}[G]$-module. From this we immediately conclude that $\mathrm{H}^n(G,\bigoplus_{j \in J}D)=0$ for all $n>0$. Since $g-1$ is invertible in $D$ for every $g 
\in G$ we have $\mathrm{H}^0(G,D)=D^G=0$ and hence also $\mathrm{H}^0(G,\bigoplus_{j \in J}D)=0$.
\end{proof}
\begin{remark} \rm Let $G$ be a group that contains a copy of the free group on $2$ generators and assume that $\mathbb{Z}[G]$ embeds into a ring $D$ such that every non-zero element of $\mathbb{Z}[G]$ is invertible in $D$ (for 
example, the group ring of any torsion-free one-relator group embeds into a skew-field by \cite{LewinLewin}). Then $D$ is a torsion-free divisible $\mathbb{Z}[G]$-module. In particular, $(g-1)m \neq 0$ for any non-trivial $g \in G$ 
and non-zero $m \in D$. But as observed in \cite[Ch. 4. Prop. 2.2.]{RomanThesis} this implies that $D$ is not an injective $\mathbb{Z}[G]$-module. Hence the conclusion of the lemma above does not hold for groups that contain non-abelian free subgroups. This also shows that the injective hull $I(D)$ of $D$ viewed as a $\mathbb{Z}[G]$-module cannot be given the structure of a $D$-module extending the ring structure of $D$. Indeed, assume this was the case and take a 
non-zero $m \in I(D)$. Since $I(D)$ is an essential extension of $D$ as an $\mathbb{Z}[G]$-module, there exists a non-zero $r \in \mathbb{Z}[G]$ such that $r\cdot m \in D$. But since $r$ is invertible in $D$ and the ring structure of $D
$ can be extended to an $D$-module structure on $I(D)$, we have $m= (rr^{-1})\cdot m = r^{-1}(r\cdot m) \in D$. Hence $D=I(D)$ and $D$ is an injective $\mathbb{Z}[G]$-module, which is a contradiction. 

\end{remark}

\section{Proofs of the main results}

We are now ready to prove our main results.\\

\noindent \emph{proof of Theorem A and C.} Let $G$ be a group that satisfies either the assumptions of Theorem A or Theorem C. Our task is to prove that $G$ of type $FP$. By Lemma \ref{lemma: direct limit} it suffices to show that 

\[      \varinjlim{\mathrm{H}^n(G,F_i)}=0     \] 
for any vanishing directed system of free $\mathbb{Z}[G]$-modules $\{F_i\}_{i \in I}$. Let $\{F_i\}_{i \in I}$ be such a system. As we discussed in the previous section, we can embed the ring $\mathbb{Z}[G]$ into a ring $D$ such that 
every non-zero element of $\mathbb{Z}[G]$ is invertible in $D$. Consider the short exact sequence

\[     0 \rightarrow \mathbb{Z}[G] \rightarrow D \rightarrow D / \mathbb{Z}[G] \rightarrow 0.    \]
Tensoring with $-\otimes_{\mathbb{Z}[G]}F_i$ for every $i \in I$, we obtain a short exact sequence of vanishing directed systems of left $\mathbb{Z}[G]$-modules
\[         0 \rightarrow \{  F_i\}_{i \in I} \rightarrow  \{  D \otimes_{\mathbb{Z}[G]}F_i\}_{i \in I} \rightarrow \{  D \otimes_{\mathbb{Z}[G]}F_i/F_i\}_{i \in I}   \rightarrow 0.   \]
By considering the associated directed system of long exact cohomology sequences and taking the direct limit, we obtain the exact sequence
\begin{equation*}   \varinjlim{ \mathrm{H}^{n-1}(G, D \otimes_{\mathbb{Z}[G]}F_i/F_i)} \rightarrow  \varinjlim{ \mathrm{H}^{n}(G, F_i)}  \rightarrow \varinjlim{ \mathrm{H}^{n}(G, D \otimes_{\mathbb{Z}[G]}F_i)} .    \end{equation*}
Since $G$ is of type $FP_{n-1}$, it follows from  \cite[Th. 1.3.]{BieriBook}that  $$\varinjlim{ \mathrm{H}^{n-1}(G, D \otimes_{\mathbb{Z}[G]}F_i/F_i)}=0.$$ 
For each $i \in I$, we have $D \otimes_{\mathbb{Z}[G]}F_i \cong \bigoplus_{j \in J} D$ for some index set $J$ depending on $i$. 

Now let $X$ be an acyclic $G$-CW complex with amenable stabilisers and without a free orbit of $n$-cells. If $G$ is amenable we can choose  $X=\{\ast\}$. Associated to the acyclic $G$-CW-complex $X$, there is a convergent stabiliser spectral sequence (e.g.~see \cite[Ch.VIII 
Section 7]{Brown})

\[       E_{1}^{p,q}= \prod_{\sigma \in \Delta_p}\mathrm{H}^q(K_{\sigma}, \bigoplus_{j \in J} D)) \Longrightarrow \mathrm{H}^{p+q}(G, \bigoplus_{j \in J} D).    \] 
Here, $\Delta_p$ is a set of representatives of $G$-orbits of $p$-cells of $X$ and $K_{\sigma}$ is the stabiliser of $\sigma$. Since every $K_{\sigma}$ is amenable, we conclude from Lemma \ref{lemma: ore} that $$\mathrm{H}
^q(K_{\sigma}, \bigoplus_{j \in J} D)=0$$
for every $q>0$, every $p\geq 0$ and every $\sigma \in \Delta_p$.  Hence, $E^1_{p,q}=0$ for all $q>0$.

Since $K_{\sigma}$ is a non-trivial amenable group for every $\sigma \in \Delta_n$ it also follows from Lemma \ref{lemma: ore} that  

\[        E^1_{n,0}=  \prod_{\sigma \in \Delta_n} \mathrm{H}^0(K_{\sigma}, \bigoplus_{j \in J} D) =0.\]
We conclude that the entire $(p,q)$-line with $p+q=n$ of the spectral sequence above is zero. It follows that $\mathrm{H}^{n}(G, \bigoplus_{j \in J} D)=0$ and hence
\[        \varinjlim{ \mathrm{H}^{n}(G, D \otimes_{\mathbb{Z}[G]}F_i)}=0. \]
This implies that $\varinjlim{ \mathrm{H}^{n}(G, F_i)} =0$, 
which proves Theorems A en C.

\qed\\
\noindent \emph{proof of Corollary 2.} Let $\Gamma$ be the fundamental group of a graph of groups with cyclic edge groups and vertex groups that are either subgroups of the additive rational numbers or solvable Baumslag-Solitar 
groups. By Bass-Serre theory (\cite{SerreTrees}), $\Gamma$ acts on a tree $T$ (a $1$-dimensional acyclic $\Gamma$-CW-complex) with cyclic edge (1-cell) stabilisers and vertex (0-cell) stabilisers that are either subgroups of the 
additive rational numbers or solvable Baumslag-Solitar groups. It follows that the stabilisers of $0$-cells have cohomological dimension at most two (e.g.~see \cite{gilden}) while the stabilisers of $1$-cells have cohomological 
dimension at most one. An application of the stabiliser spectral sequence associated to the action of $\Gamma$ on $T$ shows that $\Gamma$, and hence also every subgroup of $\Gamma$, has cohomological dimension at most 
$2$. Since finitely generated groups are of type $FP_1$, the corollary follows from Corollary 1. \qed \\

\noindent \emph{proof of Corollary 3.} Let $H$ be a non-trivial amenable subgroup of $G$ such that $G$ commensurates $H$. Now consider the discrete $G$-set $\mathcal{S}=\coprod_{g\in G} G/H^{g}$ and let $X$ denote the infinite join of $\mathcal{S}$ with itself. By construction, there is a cellular and admissible action of $G$ on $X$ such that the cell-stabilisers are finite intersections of conjugates of $H$. Since $G$ commensurates $H$, we conclude that every cell in $X$ has  a non-trivial amenable stabiliser.  Moreover, since taking joins increases the connectivity, $X$ is acyclic. We conclude that $X$ is an acyclic $G$-CW complex with amenable stabilisers and  no free orbits of $n$-cells, for 
any $n$.  The corollary now follows from Theorem C. \qed \\

We now turn to the proof of Theorem B. \\

\noindent \emph{proof of Theorem B.} First consider the case where $G$ is finitely generated.  In this case it follows from Theorem A that $G$ is of type $FP$. Since amenable groups satisfy the Bass conjecture by \cite{BCM}, we 
conclude from \cite[Section 4.1]{Eckmann2} that the ordinary Euler characteristic  of $G$ coincides with the $L^2$-Euler characteristic of $G$ and is therefore equal to zero (see also \cite{LuckL2Book}). This implies that $\mathrm{H}_1(G,\mathbb{C})\cong 
\mathbb{C}\otimes_{\mathbb{Z}}G/[G,G] \neq 0$, proving that $G$ admits a surjection $\pi$ onto $\mathbb{Z}$. Since $G$ is of type $FP_2$, i.e. almost finitely presented, it follows from \cite[Theorem A]{BieriStrebel} that $G$ can be 
realised as an HNN-extension of a finitely generated base group $K$ with stable letter $t$ mapped to $1 \in \mathbb{Z}$ by $\pi$. Since $G$ is amenable, it does not contain non-abelian free subgroups. This forces the HNN-extension 
to be ascending, by Britton's Lemma. Also, by Theorem A, $K$ is of type $FP$. We claim that $\mathrm{cd}(K)=1$. To see this, assume by contradiction $\mathrm{cd}(K)=2$. Since $K$ must be one ended (since it does not contain 
non-abelian free subgroups) this implies that $\mathrm{H}^1(K,\mathbb{Z}[G])=0$. But by \cite[Th. 0.1]{BrownGeoghegan}, the map $\alpha$ in the Mayer-Vietoris sequence
\[     \mathrm{H}^1(K,\mathbb{Z}[G]) \rightarrow  \mathrm{H}^2(G,\mathbb{Z}[G]) \rightarrow  \mathrm{H}^2(K,\mathbb{Z}[G])   \xrightarrow{\alpha } \mathrm{H}^2(K,\mathbb{Z}[G])      \]
associated to the ascending HNN-extensions $G=K{\ast}_t$ is injective. We therefore conclude that $\mathrm{H}^2(G,\mathbb{Z}[G])=0$, contradicting the assumption that $\mathrm{cd}(G)=2$. This proves our claim, implying that $K
$ is free by Stalling's Theorem \cite{Stallings}. Since $K$ is amenable, this forces $K=\mathbb{Z}=\langle a \rangle$, proving that $G$ equals a solvable Baumslag-Solitar group 
\[  BS(1,m) = \langle a ,t \ | \ t^{-1}at = a^m \rangle      \]
for some non-zero $m \in \mathbb{Z}$.

Now consider the general case. Since every finitely generated group of cohomological dimension $1$ is free by Stallings' theorem it follows from the above that every finitely generated subgroup of $G$ either infinite cyclic or a solvable 
Baumslag-Solitar group. In particular, $G$ is locally solvable and elementary amenable of finite Hirsch length. Since torsion-free elementary amenable groups of finite Hirsch length are virtually solvable by \cite{HillmannLinnel}, it 
follows that $G$ must be solvable. By \cite[Theorem 5]{gilden}, every solvable group of cohomological dimension two is isomorphic to either a solvable Baumslag-Solitar group or a non-cyclic subgroup of the additive rationals.

\qed


\begin{thebibliography}{12345}
\bibitem{B} Bartholdi L., Kielak, D.
{\em Amenability of groups is characterized by Myhill's Theorem}, arXiv preprint (2016)
\bibitem{BGS} Bartholdi L., Grigorchuk, R., and Sunik, Z.,
{Branch Groups}, Handbook of algebra, Vol. 3 (2003), North-Holland, Amsterdam, 989--1112
\bibitem{Bekka} Bekka, B., de la Harpe, P., and Valette, A.,
{\em Kazhdan's Property (T)}, New Mathematical Monographs. Cambridge University Press (2008)
\bibitem{BCM} Berrick, A. J., Chatterji, I, and, Mislin, G.,
 {\em From acyclic groups to the Bass conjecture for amenable groups}, Math. Ann. Vol. 329(4) (2004), 597--621
\bibitem{BestvinaBrady} Bestvina, M. and Brady, N.,
{ \em Morse theory and finiteness properties of groups}, Invent. Math. 129(3) (1997), 445--470
\bibitem{BieriBook} Bieri, R.,
 {\em Homological Dimension of Discrete Groups}, Queen Mary College Mathematics Notes
 \bibitem{BieriStrebel} Bieri, R., and Strebel, R.,
 {\em Almost finitely presented soluble groups}, Comm. Math. Helv. Vol. 53(1) (1987), 258--278
\bibitem{Brown} Brown, K. S.,
{Cohomology of groups}, Graduate texts in Mathematics 77, Springer (1982)
\bibitem{BrownGeoghegan}  Brown, K. S. and Geoghegan R.,
{Cohomology with free coefficients of the fundamental group of a graph of groups}, Comm. Math. Helv. Vol. 60 (1985), 31--45
\bibitem{Eckmann2} Eckmann, B.,
{Projective and Hilbert modules over group algebras, and finitely dominated spaces}, Comm. Math. Helv. Vol. 71(1) (1996), 453--462 
 \bibitem{Elek} Elek, G.,
{On the analytic zero divisor conjecture of Linnell}, Bulletin of the London Mathematical Society, Vol. 35(2) (2003), 236--238
\bibitem{gilden} Gildenhuys, D.,
 { \em Classification of soluble groups of cohomological dimension two},  Math. Zeit., Vol. 166(1) (1979), 21--25 
\bibitem{GriMed}  Grigorchuk, R., and  Medynets, K.,
{\em On algebraic properties of topological full groups}, Sbornik: Mathematics, Vol. 205(6) (2014), 87--108
 \bibitem{Hillman2} Hillman, J., A.,
{\em Elementary  amenable  groups and  4-manifolds  with  Euler  characteristic0},  J. Austral. Math.  Soc. Ser. A 50  (1991), 160--170
\bibitem{HillmannLinnel} Hillman, J. A., and Linnell, P. A.,
{\em Elementary amenable groups of finite Hirsch length are locally-finite by virtually-solvable}, J. Austral. Math. Soc. Ser. A, Vol. 52(2) (1992), 237--241
\bibitem{jusmon} Juschenko, K., and Monod, N., 
{\em Cantor systems, piecewise translations and simple amenable groups}, Ann. Math. (2) 178(2) (2013), 775--787
\bibitem{jus} Juschenko, K.,
{\em Non-elementary amenable subgroups of automata groups}, Journal of Topology and Analysis (to appear) 
\bibitem{KLL} Kropholler P., Linnell, P., and L\"{u}ck, W.,
{\em Groups of small homological dimension and the Atiyah conjecture}, Geometric and Cohomological Methods in Group Theory, LMS Lecture Note Series 358 (2009), 272--277
\bibitem{KLM} Kropholler P., Linnell, P., and Moody, J.A,
{\em Applications of a New $K$-Theoretic Theorem to Soluble Group Rings}, Proc.  Am. Math. Soc. Vol. 104(3) (1988), 675--684 
\bibitem{Lam} Lam, T.Y.,
{Lectures on Modules and Rings}, Graduate texts in Mathematics 189 (1998), Springer
\bibitem{LearySaad} Leary, I. J. and Saadeto\u{g}lu M.,
 {\em The cohomology of Bestvina-Brady groups}, Groups, Geometry and Dynamics $5$ ($1$) ($2011$), $121$-$138$.
\bibitem{LewinLewin} Lewin J. and Lewin T.,
{\em An embedding of the group algebra of a torsion-free one-relator group in a field}, Journal of Algebra Vol. 52(1) (1978), 39--74
\bibitem{LuckL2Book}  L\"{u}ck, W.,
{\em L2-invariants: theory and applications to geometry and K-theory}, A Series of Modern Surveys in Mathematics Vol, 44. Springer-Verlag, Berlin, 2002.
\bibitem{Linnell} Linnell, P.,
{\em Analytic versions of the zero divisor conjecture}, Geometry and cohomology in group theory (Durham 1994), London Math. Soc. Lecture Note Series 252, Cambridge University Press (1998) 209--248.
\bibitem{Ol} Olshanskii, A.Y.,
{\em The Geometry  of Defining  Relations in Groups}, Kluwer Academic Publishers, (1991)
\bibitem{RomanThesis} Roman A.H.,
{\em Zero Divisors, Group Von Neumann Algebras and Injective Modules}, Master thesis, Virginia Polytechnic Institute and State University (2015)
\bibitem{Stallings} Stallings J. R.,
{\em  Groups of dimension 1 are locally free},  Bull. Amer. Math. Soc. 74 (1968), 361--364
\bibitem{SerreTrees} Serre, J.-P.,
{\em Trees}, Springer (1980).
\bibitem{Tamari} Tamari D.,
{\em A refined classification of semi-groups leading to generalised polynomial rings with a generalized degree concept}, Proc. ICM vol. 3, Amsterdam (1954) 439--440.
\bibitem{VanDeWater} Van de Water, A.,
 {\em A Property of torsion-free modules over left Ore domains}, Proc. Am. Math. Soc. Vol. 25(1) (1970), 199--201 
\bibitem{Weibel} Weibel, C.A.,
  {\em An introduction to homological algebra}, Cambridge University Press (1994)
\end{thebibliography}
\end{document}